\theoremstyle{plain}
\newtheorem*{thm*}{Theorem}
\newtheorem{thm}{Theorem}[section]
\newtheorem{lem}[thm]{Lemma}
\renewcommand{\phi}{\varphi}
\DeclareMathOperator{\rad}{rad}
\begin{document}

\title[On decompositions of quadrinomials and related Diophantine equations]{On decompositions of quadrinomials and related Diophantine equations}
\author{Maciej Gawron}

\keywords{Diophantine equations, lacunary polynomials, polynomial decomposition}
\subjclass[2000]{11D41}
\footnote{The research of the author was supported by Polish National Science Centre grant UMO-
2014/13/N/ST1/02471}

\begin{abstract}
Let $A,B,C,D$ be rational numbers such that $ABC \neq 0$, and let $n_1>n_2>n_3>0$ be positive integers. 
We solve the equation
\begin{equation*}
 Ax^{n_1}+Bx^{n_2}+Cx^{n_3}+D = f(g(x)),
\end{equation*}
in $f,g \in \mathbb{Q}[x]$. In sequel we use Bilu-Tichy method to prove finitness of integral solutions of the equations
\begin{equation*}
Ax^{n_1}+Bx^{n_2}+Cx^{n_3}+D = Ey^{m_1}+Fy^{m_2}+Gy^{m_3}+H,
\end{equation*}
where $A,B,C,D,E,F,G,H$ are rational numbers $ABCEFG \neq 0$ and $n_1>n_2>n_3>0$, $m_1>m_2>m_3>0$, $\gcd(n_1,n_2,n_3) = \gcd(m_1,m_2,m_3)=1$
and $n_1,m_1 \geq 9$. And the equation
\begin{equation*} 
A_1x^{n_1}+A_2x^{n_2}+\ldots+A_l x^{n_l} + A_{l+1} = Ey^{m_1}+Fy^{m_2}+Gy^{m_3},
\end{equation*}
where $l \geq 4$ is fixed integer, $A_1,\ldots,A_{l+1},E,F,G$ are non-zero rational numbers, except for possibly $A_{l+1}$,
$n_1>n_2>\ldots > n_l>0$, $m_1>m_2>m_3>0$ are positive integers such that $\gcd(n_1,n_2, \ldots n_l) = \gcd(m_1,m_2,m_3)=1$, and $n_1 \geq 4$, $m_1 \geq 2l(l-1)$.

\end{abstract}

\maketitle

\section{Introduction}
In paper \cite{SCH1} Schinzel, Pint\'{e}r and  P\'{e}ter give an inefficient criterion for the Diophantine equation of the form
\begin{equation*}
 ax^m+bx^n+c = dy^p+e^q,
\end{equation*}
where $a,b,c,d,e$ rationals, $ab\neq 0 \neq de$, $m>n>0, p>q>0$, $\gcd(m,n)=1$, $\gcd(p,q)=1$, and $m,p \geq 3$ to have
infinitely many integer solutions. 

In the later paper Schinzel \cite{SCH2} dropped the assumption $\gcd(m,n)=1$, $\gcd(p,q)=1$ and gives a necessary and sufficient condition
for such equation to have infinitely many integer solutions.

In the recent paper Kreso \cite{KRSNY}
proved the finitness of integral solutions for the equation
\begin{equation*}
 a_1x^{n_1}+a_2x^{n_2}+\ldots+a_lx^{n_l}+a_{l+1} =b_1y^{m_1} + b_2y^{m_2},
\end{equation*}
where $l \geq 2$ and  $m_1 > m_2$, $n_1>n_2>\ldots > n_l$ are fixed positive integers satisfying \mbox{$\gcd(m_1,m_2)=1$,} $\gcd(n_1,n_2,\ldots,n_l) = 1$,
$a_1,a_2,\ldots,a_l,a_{l+1},b_1,b_2$ are non-zero rationals, except for possibly $a_{l+1}$. With $n_1 \geq 3, m_1 \geq 2l(l-1)$ and $(n_1,n_2) \neq (m_1,m_2)$.

All the mentioned results relies on Bilu-Tichy Theorem \cite{BTCH}, and theorems concerning decompositions of trinomials \cite{SF} as main ingredients.
No such results for the equations involving at least three non-zero coefficients at positive powers
on both sides are known 
mainly because we have no results concerning decompositions of lacunary polynomials with more than three non-zero terms~\cite{KRSNY}.
Some partial results in this direction are given in \cite{KRTCH}.

In this note we describe all possible decompositions of quadrinomials.
In the sequel we use Bilu-Tichy theorem to prove the following generalizations of Schinzel and Kreso results. 
More precisely, we prove the following
\begin{thm*}[A] Let $f(x) = Ax^{n_1}+Bx^{n_2}+Cx^{n_3}+D$, $g(x) = Ex^{m_1}+Fx^{m_2}+Gx^{m_3}+H$ with $f,g \in \mathbb{Q}[x]$, $n_1>n_2>n_3>0$, $m_1>m_2>m_3>0$, and 
 $\gcd(n_1,n_2,n_3) = 1$, $\gcd(m_1,m_2,m_3)=1$, $(m_1,m_2,m_3) \neq (n_1,n_2,n_3)$, $ABC \neq 0$, $EFG \neq 0$ and $n_1,m_1 \geq 9$. Then the equation
 \begin{equation*}
  f(x) = g(y)
 \end{equation*}
has only finitely many integer solutions.
\end{thm*}

\begin{thm*}[B]
 Let $l \geq 4$ and $n_1>n_2> \ldots > n_l>0$, $m_1>m_2>m_3>0$ be positive integers. Let
 \begin{equation*}
 f(x) = A_1x^{n_1}+A_2x^{n_2}+\ldots+A_l x^{n_l} + A_{l+1} \quad \text{ and } \quad  g(x) = Ex^{m_1}+Fx^{m_2}+Gx^{m_3}
 \end{equation*}
 be polynomials with rational coefficients such that $\gcd(n_1,n_2,\ldots,n_l) = 1$, $\gcd(m_1,m_2,m_3)=1$,  
 $A_1A_2\ldots A_l \neq 0$, $EFG \neq 0$ and $m_1 \geq 2l(l-1), n_1 \geq 4$. Then the equation 
 \begin{equation*}
  f(x) = g(y)
 \end{equation*}
has only finitely many integer solutions.
 \end{thm*}

Our results are ineffective as we use Theorem of Bilu and Tichy which relies on classical theorem of Siegel on integral points.

\section{Decompositions of quadrinomials}
In this section we describe decompositions of quadrinomials. We will use some classical lemmas.
Let us recall Mason-Stothers Theorem \cite{MS1,MS2}.
\begin{thm}\label{MST} 
Let $a(t),b(t),c(t) \in K[t]$ be relatively prime polynomials over a field of characteristic zero, such that $a+b=c$, and not all of them are constant.
Then 
\begin{equation*}
 \max \{ \deg a, \deg b, \deg c \} \leq \deg (\rad(abc))-1,
\end{equation*}
where $\rad(f)$ is the product of the distinct irreducible factors of $f$. 

\end{thm}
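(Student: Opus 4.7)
The plan is to exploit a Wronskian-type polynomial $W := ab' - a'b$, show it is divisible by $abc/\rad(abc)$, and conclude by comparing degrees. First I would observe that differentiating the relation $a+b=c$ and eliminating yields the equivalent expressions
$$W \;=\; ab' - a'b \;=\; ac' - a'c \;=\; b'c - bc',$$
so a single polynomial $W$ encodes all three pairs. Moreover, from $\gcd(a,b,c)=1$ together with $a+b=c$ it follows that $a,b,c$ are pairwise coprime, since any common prime divisor of two of them would divide the third.

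A short argument shows $W \not\equiv 0$: otherwise $(a/b)' = 0$ in $K(t)$, and as $K$ has characteristic zero this forces $a/b$ to be a constant, which combined with $\gcd(a,b)=1$ makes both $a$ and $b$ constant, and hence $c$ as well---contradicting the hypothesis that not all three are constant.

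The key step is a divisibility claim. For any irreducible factor $p$ of $a$ with exact multiplicity $m$, one has $p^{m-1} \mid a$ and $p^{m-1} \mid a'$ (here characteristic zero prevents the multiplicity from collapsing under differentiation), so $p^{m-1}$ divides both terms of $W = ab' - a'b$. Collecting over all irreducible factors of $a$ gives $a/\rad(a) \mid W$, and the symmetric formulae above yield $b/\rad(b) \mid W$ and $c/\rad(c) \mid W$. Pairwise coprimality of $a,b,c$ then upgrades this to
$$\frac{abc}{\rad(abc)} \;\Bigm|\; W.$$

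The conclusion is a degree count. From the defining formula, $\deg W \le \deg a + \deg b - 1$, while the displayed divisibility gives $\deg a + \deg b + \deg c - \deg \rad(abc) \le \deg W$. Combining yields $\deg c \le \deg \rad(abc) - 1$, and the analogous bounds for $\deg a$ and $\deg b$ follow by symmetry: apply the same argument to the triples $(a,-c,-b)$ and $(-c,b,-a)$, each of which again satisfies the additive relation and shares the same radical of the product. The most delicate points will be the non-vanishing of $W$ and the use of characteristic zero in the multiplicity argument; one must also take a moment to handle the degenerate case in which one of $a,b,c$ is a nonzero constant, where the corresponding quotient becomes a unit but the inequality still survives.
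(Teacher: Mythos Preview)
Your argument is the standard Wronskian proof of the Mason--Stothers theorem and is correct in all essentials: the three equivalent expressions for $W$, the non-vanishing via $(a/b)'=0$ forcing $a,b$ constant in characteristic zero, the divisibility $\dfrac{abc}{\rad(abc)}\mid W$ obtained from pairwise coprimality, and the concluding degree comparison all go through as you describe. The degenerate case where one of $a,b,c$ is a nonzero constant is harmless, since then the corresponding quotient by its radical is a unit and the bound $\deg W\le \deg a+\deg b-1$ still holds.

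There is nothing to compare against, however: the paper does not supply its own proof of this theorem. It is stated as a recalled classical result with citations to Mason and Stothers, and then used as a black box in the proof of Theorem~\ref{DECOMP}. So your proposal goes strictly beyond what the paper itself contains.
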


Let us recall Haj\'{o}s lemma \cite{HAJO}.

\begin{lem}\label{HAJ} Let $K$ be a field o characteristic $0$. 
If $f(x) \in K[X]$ has a root $z \neq 0$ of multiplicity $n$ then $f$ has at least $n+1$ terms.
\end{lem}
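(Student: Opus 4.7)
The plan is to prove the lemma by induction on the multiplicity $n$, with differentiation as the engine. The key observation is that in characteristic $0$, differentiation strictly decreases the multiplicity of any nonzero root by exactly $1$, and, provided $f$ has a nonzero constant term, strictly decreases the number of nonzero monomials by exactly $1$.

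First I would make a harmless reduction to the case $f(0) \neq 0$. Writing $f(x) = x^e h(x)$ with $h(0) \neq 0$, the polynomials $f$ and $h$ have the same number of terms, and a nonzero $z$ is a root of $f$ of multiplicity $n$ if and only if $z$ is a root of $h$ of multiplicity $n$. Hence it suffices to prove the claim for $h$, and from now on I assume $f(0) \neq 0$.

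The base case $n=1$ is immediate: a single-term polynomial $ax^e$ has $0$ as its only root, so a polynomial with a nonzero root must already have at least two terms. For the inductive step, assume the statement for multiplicity $n-1$, and let $f(x) = \sum_{i=1}^{k} a_i x^{e_i}$ with $a_i \neq 0$, $e_1 > e_2 > \cdots > e_k = 0$, have a nonzero root $z$ of multiplicity $n \geq 2$. Differentiating gives $f'(x) = \sum_{i=1}^{k-1} a_i e_i x^{e_i - 1}$: the constant term of $f$ contributes nothing, and because $\op{char} K = 0$ every remaining coefficient $a_i e_i$ is nonzero, so $f'$ has exactly $k-1$ terms. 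Since $z \neq 0$ is a root of $f$ of multiplicity $n$, it is a root of $f'$ of multiplicity $n-1 \geq 1$, so the inductive hypothesis yields $k - 1 \geq (n-1)+1 = n$, whence $k \geq n+1$.

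The one point to be vigilant about is the term-count bookkeeping in the differentiation step: the whole argument depends on \emph{exactly} one term being lost when we pass from $f$ to $f'$, and this is precisely where characteristic $0$ is used, since only the constant term is annihilated while all other coefficients $a_i e_i$ remain nonzero. Once this is in place, the induction runs without further difficulty.
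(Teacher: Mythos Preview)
Your proof is correct and follows essentially the same approach as the paper: induction on the multiplicity $n$, stripping off the power of $x$ so that the constant term is nonzero, and then differentiating to drop both the multiplicity and the term count by exactly one (using characteristic $0$). The only cosmetic difference is that the paper re-does the factorization $f(x)=x^{k}f_{1}(x)$ with $f_{1}(0)\neq 0$ inside each inductive step rather than once up front, but the substance is identical.
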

\begin{proof}
 We use induction on $n$. When $n=1$ then the statement obviously holds. For $n > 1$ let us put $f(x) = x^kf_1(x)$ where $f_1(0) \neq 0$.
 Then $z$ is a root of $f_1'$ of multiplicity $n-1$ and $f_1'$ has exactly one term less than $f$. The result follows.
\end{proof}

\begin{lem}\label{pierwiastek_z_trojmianu}
 Let $K$ be a field o characteristic $0$. If $f(x) \in K[x]$ satisfy the equation $f(x)^2 = x^{n_1}+Ax^{n_2}+B$ for some $A,B \in K \setminus \{0\}$ and
 $n_1 > n_2 >0$ then $f(x)$ is a binomial.
\end{lem}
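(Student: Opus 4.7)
\emph{Plan.} I would first set up the degree and non-vanishing information: equating leading terms in $f(x)^2 = x^{n_1}+Ax^{n_2}+B$ gives $d := \deg f = n_1/2$ (in particular $n_1$ is even), and evaluating at $0$ gives $f(0)^2 = B \neq 0$, so $f(0) \neq 0$. The strategy is to exhibit two binomials divisible by $f$, of degrees $n_1-n_2$ and $n_2$ respectively; since $2d = n_1 = (n_1-n_2)+n_2$, both degree bounds will then have to be tight, forcing $f$ to coincide, up to a scalar, with one of these binomials. As a preliminary, Haj\'{o}s's lemma (Lemma~\ref{HAJ}) applied to the trinomial $f^2$ bounds the multiplicity of any nonzero root of $f^2$ by $2$; since every root of $f$ is nonzero and occurs in $f^2$ with doubled multiplicity, this already forces $f$ to be squarefree.

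\emph{Two divisibilities.} Differentiating $f^2 = x^{n_1}+Ax^{n_2}+B$ yields $2ff' = n_1 x^{n_1-1}+An_2 x^{n_2-1}$. For any root $\alpha$ of $f$, one has $\alpha \neq 0$ and the right-hand side vanishes at $\alpha$, yielding $\alpha^{n_1-n_2} = -An_2/n_1$. Since $f$ is squarefree, this gives
\[
f(x) \ \Big|\ x^{n_1-n_2} + \frac{An_2}{n_1},
\]
whence $d \leq n_1-n_2$. For the second bound, I would compute the combination
\[
n_1 f^2 - 2x(ff') = n_1(x^{n_1}+Ax^{n_2}+B) - x(n_1 x^{n_1-1}+An_2 x^{n_2-1}) = A(n_1-n_2)x^{n_2}+n_1 B,
\]
and observe that the left side factors as $f \cdot (n_1 f - 2xf')$. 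Hence $f$ also divides the binomial $A(n_1-n_2)x^{n_2}+n_1 B$, giving $d \leq n_2$.

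\emph{Conclusion and main obstacle.} Adding the two bounds, $2d \leq (n_1-n_2)+n_2 = n_1 = 2d$, so both inequalities are equalities and $d = n_1-n_2 = n_2 = n_1/2$. Thus $f$ divides the monic binomial $x^{n_1-n_2} + An_2/n_1$ of the same degree as itself, and must therefore be a nonzero scalar multiple of it, i.e., a binomial. The hard part, to my mind, is locating the second divisibility: mere knowledge that $f$ divides one binomial is insufficient (a binomial may well have divisors with more than two terms, e.g.\ $x^4-x^2+1 \mid x^6+1$), so the Euler-operator-like combination $n_1 f^2 - 2x(ff')$, which annihilates the leading $x^{n_1}$ term of $f^2$ and exposes a second binomial divisor of complementary degree, is the essential trick without which the degree squeeze would fail.
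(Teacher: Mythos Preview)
Your proof is correct, but it takes a substantially different route from the paper's. The paper argues directly by term counting: assuming $f$ has at least three nonzero terms, it writes $f(x)=x^{k_1}+Ux^{k_2}+\cdots+Vx^{k_3}+W$ with $k_1>k_2\ge k_3>0$ and $UVW\neq 0$, and observes that in $f(x)^2$ the coefficients at $x^{2k_1}$, $x^{k_1+k_2}$, $x^{k_3}$, and $x^0$ are $1$, $2U$, $2VW$, $W^2$ respectively, all nonzero and at distinct exponents, contradicting that $f^2$ is a trinomial. This is a two-line argument using nothing beyond the structure of a square. Your approach, by contrast, uses Haj\'{o}s's lemma to get squarefreeness, differentiates to obtain $f\mid n_1x^{n_1-n_2}+An_2$, and then forms the Euler-type combination $n_1f^2-2xff'$ to get $f\mid A(n_1-n_2)x^{n_2}+n_1B$, closing with a degree squeeze. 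The payoff of your method is that it yields more: you recover the explicit form of $f$ (a scalar multiple of $x^{n_1/2}+An_2/n_1$) and in particular the relation $n_2=n_1/2$, whereas the paper's argument only certifies that $f$ has at most two terms. The cost is that your proof is longer and leans on auxiliary results; incidentally, note that the first divisibility $f\mid n_1x^{n_1-n_2}+An_2$ already follows from $2ff'=x^{n_2-1}(n_1x^{n_1-n_2}+An_2)$ together with $\gcd(f,x)=1$, so squarefreeness (and hence Haj\'{o}s) is not strictly needed there.
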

\begin{proof}
 Suppose otherwise, that $f$ has at least three non-zero terms. Let us write
 \begin{equation*}
  f(x) = x^{k_1}+ Ux^{k_2}+\ldots+Vx^{k_3}+W,
 \end{equation*}
 for some $k_1>k_2 \geq k_3 > 0$, and $UVW\neq 0$. Then we have
 \begin{equation*}
  f(x)^2 = x^{2k_1}+ 2Ux^{k_1+k_2}+\ldots+2VWx^{k_3}+W^2,
 \end{equation*}
 so $f(x)^2$ has at least four non-zero terms.
\end{proof}

\begin{lem}\label{lem3} Let $K$ be an algebraically closed field of characteristic $0$. Let $f,g,h \in K[x]$ be polynomials such that
$f(x) = g(h(x))$ and $ \deg g > 1$ then there exists $\gamma \in K$ such that 
\begin{equation*}
\deg (\gcd(f(x)-\gamma, f'(x)) \geq \deg h. 
\end{equation*}
\end{lem}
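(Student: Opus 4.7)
The plan is to pick $\gamma$ to be a critical value of $g$, so that composing with $h$ forces many common roots of $f-\gamma$ and $f'$.

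First I would use the composition formula. Writing $f = g \circ h$, the chain rule gives
\begin{equation*}
f'(x) = g'(h(x))\, h'(x).
\end{equation*}
Since $\deg g > 1$ and the characteristic is zero, $g'$ is a non-constant polynomial in $K[y]$, and since $K$ is algebraically closed, $g'$ admits a root $\gamma_0 \in K$. I then set $\gamma := g(\gamma_0)$.

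Next I would exploit the double vanishing at $\gamma_0$. Since $\gamma_0$ is a root of $g(y) - \gamma$ (by the definition of $\gamma$) and also of $g'(y)$, we can factor
\begin{equation*}
g(y) - \gamma = (y - \gamma_0)\, q(y), \qquad g'(y) = (y - \gamma_0)\, r(y),
\end{equation*}
for suitable $q, r \in K[y]$. Substituting $y = h(x)$ yields
\begin{equation*}
f(x) - \gamma = \bigl(h(x) - \gamma_0\bigr)\, q(h(x)), \qquad f'(x) = \bigl(h(x) - \gamma_0\bigr)\, r(h(x))\, h'(x).
\end{equation*}
Thus the polynomial $h(x) - \gamma_0$ divides both $f(x) - \gamma$ and $f'(x)$, and so also divides their gcd (which is well-defined since $f' \neq 0$ in characteristic zero, using $\deg f \ge 2$). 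Consequently
\begin{equation*}
\deg \gcd\bigl(f(x) - \gamma,\, f'(x)\bigr) \ \geq\ \deg\bigl(h(x) - \gamma_0\bigr) \ =\ \deg h,
\end{equation*}
which is the desired bound. The trivial case $\deg h = 0$ is handled separately by observing that any $\gamma \neq f$ works (giving a nonzero constant gcd of degree $0 = \deg h$).

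There is no real obstacle here; the only delicate point is the very choice of $\gamma_0$: it must be selected so that it is simultaneously a root of $g'$ \emph{and} lies in $K$, which is exactly where the hypotheses $\deg g > 1$ and $K$ algebraically closed enter. Everything else is a formal manipulation via the chain rule and factorization over $K$.
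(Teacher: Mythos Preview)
Your proof is correct and follows essentially the same approach as the paper: choose a root of $g'$ (the paper calls it $\beta$, you call it $\gamma_0$), set $\gamma = g(\beta)$, and observe that $h(x)-\beta$ divides both $f(x)-\gamma$ and $f'(x)$. Your write-up is more detailed (explicitly invoking the chain rule and handling the degenerate case $\deg h = 0$), but the underlying idea is identical.
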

\begin{proof}
 Let $\beta$ be a root of $g'(x)$. We define $\gamma = g(\beta)$ then $h(x) - \beta$ divides both $f'(x)$ and $f(x)-\gamma$.
\end{proof}

Now we are ready to state the main theorem of this section.

\begin{thm}\label{DECOMP}
 Let $K$ be an algebraically closed field of characteristic $0$. Let $f(x) = Ax^{n_1}+Bx^{n_2}+Cx^{n_3}+D$ for some $A,B,C,D \in K$ such that $ABC \neq 0$ 
 and $n_1 > n_2 > n_3 > 0$. 
 Suppose that $f(x) = g(h(x))$ for some $g,h \in K[x]$ then one of the following cases holds
 \begin{enumerate}
  \item $g(x) = (Ax^{\frac{n_1}{d}}+Bx^{\frac{n_2}{d}}+Cx^{\frac{n_3}{d}}+D) \circ l^{-1}$ and $h(x) = l \circ x^d $ for some linear polynomial $l \in K[x]$, and
  positive integer $d | \gcd(n_1,n_2,n_3)$,
  \item $g(x) = l(x)$ and $h(x) = l^{-1} \circ f(x)$ for some linear $l \in K[x]$   
  \item $g(x) = (Ax^2+D) \circ l$ and $h(x) = l^{-1} \circ (x^{ \frac{n_1}{2}} + \frac{B}{2A} x^{\frac{n_3}{2}})$ where $l \in K[x]$ 
  is some linear polynomial. Moreover the following conditions holds $2n_2 = n_1+n_3$ and $C = \frac{B^2}{4A}$.
  \item $g(x) = (Ax(x-c^2)+D) \circ l $ and $h(x) = l^{-1} \circ (x^{2n_3}+cx^{n_3})$ for some linear $l \in K[x]$, and non-zero $c$. 
  Moreover the following conditions holds
  $n_1=4n_3,n_2=3n_3$
 \end{enumerate}
\end{thm}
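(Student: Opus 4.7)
The plan is to reduce to a canonical form and then split by $\deg g$. Using the linear freedom $f=(g\circ l^{-1})\circ(l\circ h)$, I first choose $l$ so that $h$ is monic with $h(0)=0$; then $g(0)=f(0)=D$. The case $\deg g=1$ immediately yields case (2). If $h(x)=x^d$, every exponent of $f=g(x^d)$ is divisible by $d$, so $d\mid\gcd(n_1,n_2,n_3)$ and we are in case (1). The substantive task is the case $k:=\deg g\geq 2$ with $h$ having at least two nonzero terms; I aim to show $k=2$ and derive case (3) or (4).

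For $k=2$, write $g(x)=A'(x-\beta)^2+D'$, so $f(x)-D'=A'(h(x)-\beta)^2$. If $\beta=0$, then $D'=D$ and $f-D=A'h^2=x^{n_3}Q(x)$ with $Q(x)=Ax^{n_1-n_3}+Bx^{n_2-n_3}+C$. The square structure forces $n_3$ to be even, $h=x^{n_3/2}h_2$, and $A'h_2^2=Q$; Lemma \ref{pierwiastek_z_trojmianu} (after normalizing the leading coefficient) forces $h_2$ to be a binomial, and matching exponents and coefficients yields $2n_2=n_1+n_3$ and $C=B^2/(4A)$, which is case (3). If $\beta\neq 0$, set $h_1:=h-\beta$; then $h_1(0)=-\beta\neq 0$ and $h_1^2$ has exactly four nonzero terms. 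Writing $h_1=\sum_{i=0}^{r}a_i x^{e_i}$ with $a_i\neq 0$ and $0=e_0<e_1<\cdots<e_r$, the coefficients of $h_1^2$ at the four boundary exponents $0$, $e_1$, $e_{r-1}+e_r$, $2e_r$ are $a_0^2$, $2a_0a_1$, $2a_{r-1}a_r$, $a_r^2$, all nonzero, and thus constitute the entire support. The sumset bound $|E+E|\geq 2|E|-1$ for $E=\{e_i\}$, with equality iff $E$ is an arithmetic progression, together with the four-term constraint, forces $r=2$, $e_2=2e_1$, and the cancellation $a_1^2+2a_0a_2=0$. Matching the resulting support $\{0,e_1,3e_1,4e_1\}$ of $h_1^2$ to $\{0,n_3,n_2,n_1\}$ gives $n_1=4n_3$ and $n_2=3n_3$, and solving the coefficient equations produces case (4).

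The crux of the proof, and its main obstacle, is ruling out $k\geq 3$ with non-monomial $h$. The strategy is to show that $g(h)$ must then have at least four positive-exponent nonzero terms, contradicting the form of $f$ (which has only three). Let $d_1>0$ be the second-highest exponent of $h$ with coefficient $a_1\neq 0$. In the expansion $g(h)=g_kh^k+\sum_{j<k}g_jh^j$, the top two exponents from $g_kh^k$ are $kd=n_1$ and $(k-1)d+d_1$, both strictly greater than $(k-1)d$, the maximum exponent of $\sum_{j<k}g_jh^j$; hence these two positive-exponent terms are non-cancellable. To produce two further non-cancellable positive-exponent terms, I analyze the third- and fourth-highest exponents of $h^k$---namely $(k-2)d+2d_1$ when $h$ is binomial, or $(k-1)d+d_2$ when $h$ has a third exponent $d_2$---against the leading exponents $(k-1)d$ and $(k-2)d$ of the lower contributions; a case analysis on the relative sizes of $d,d_1,d_2$ shows that in every subcase at least two more nonzero positive-exponent terms appear in $g(h)$. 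A complementary and more uniform approach uses Lemma \ref{lem3} at the critical value $\gamma=D$: the identity $f-D=h^s\tilde g_s(h)$, where $s$ is the multiplicity of $0$ as a root of $g-D$, yields $\tilde h^s\mid Q$ for $\tilde h=h/x^{v_x(h)}$, and the Hajos-type bound $\deg\gcd(Q,Q')\leq\gcd(n_1-n_2,n_2-n_3)$ constrains $\deg\tilde h$; combining with the same analysis at the other critical values of $g$ forces $\tilde h$ to be constant when $k\geq 3$, i.e., $h$ to be a monomial, completing the proof.
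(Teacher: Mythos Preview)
Your treatment of $\deg g\leq 2$ is largely sound, but the sumset step in the case $k=2$, $\beta\neq 0$ is incomplete: the bound $|E+E|\geq 2|E|-1$ does not by itself force $r=2$, since interior terms of $h_1^2$ can cancel even for $r\geq 3$. One must verify that the resulting system of cancellation conditions is inconsistent (for $r=3$ in arithmetic progression this is the triple $a_1^2+2a_0a_2=0$, $a_1a_2+a_0a_3=0$, $a_2^2+2a_1a_3=0$, which indeed has no solution with $a_0a_1a_2a_3\neq 0$), and you do not do this.

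The substantive gap is the case $\deg g\geq 3$. Neither of your two approaches is carried out, and neither is routine. In the term-counting approach, the two non-cancellable top terms at $kd$ and $(k-1)d+d_1$ are clear, but producing two \emph{further} guaranteed nonzero positive-exponent terms requires controlling cancellations between $g_k h^k$ and all lower $g_j h^j$; since nothing is known about which $g_j$ vanish, one needs an honest case split over the shape of $g$ and the gaps $d-d_1,\,d_1-d_2,\ldots$, and you do not supply it. In the critical-value approach, the bound $\deg\gcd(Q,Q')\leq\gcd(n_1-n_2,n_2-n_3)$ is useful only when the multiplicity $s$ of $0$ in $g-D$ is at least $2$; when $s=1$ (which is allowed, and is precisely the paper's Case $a_0=1$) it gives nothing, and for other critical values $\gamma\neq D$ the polynomial $f-\gamma$ has four nonzero terms, so Haj\'os permits multiplicity $3$ and the analogous bound on $\deg(h-\beta)$ is much weaker. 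There is no evident way to assemble these weak bounds into ``$\tilde h$ constant''.

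The paper handles $\deg g\geq 3$ by a different route. After the same Haj\'os reduction to $a_0\in\{1,2\}$, it shows by successive ``smallest/largest offending exponent'' arguments that $h$ is a polynomial in $x^{n_3}$ (resp.\ $x^{n_3/2}$), then a polynomial in a further power $x^k$, and that $g$ has a compatible sparse shape. Two separate applications of Mason--Stothers then force first $k=1$ and then $\deg h_2\leq 1$, after which a symmetry argument (under $x\mapsto -x-c$) pins down $\deg g=2$. This chain --- the exponent-congruence reductions together with Mason--Stothers --- is what actually closes the $\deg g\geq 3$ case, and none of it appears in your outline.
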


\begin{proof}
By replacing $f$ and $g$ by $(Ax+D)^{-1} \circ f$ and $(Ax+D)^{-1} \circ g$ we can assume that $A=1, D=0$. Moreover by replacing
$g,h$ by $g \circ l^{-1}$ and $l \circ h$ for suitable linear $l$, we can assume that $g,h$ are monic, and $g(0)=h(0)=0$.

Let us write
\begin{equation*}
 g(x) = x^{a_0} (x-x_1)^{a_1} \cdots (x-x_k)^{a_k},
\end{equation*}
with $a_0,a_1,\ldots,a_k \in \mathbb{N}_+$, $x_i \neq 0$ for $i=1,2,\ldots, k$ and $x_i \neq x_j$ for $i \neq j$, $i,j=1,2,\ldots,n$.
We have 
\begin{equation*}
 h(x)^{a_0}(h(x)-x_1)^{a_1} \cdots (h(x)-x_k)^{a_k} = x^{n_1}+Bx^{n_2} + Cx^{n_3}.
\end{equation*}
We write $h(x) = x^d h_1(x)$, where $h_1(x)$ is some monic polynomial such that $h_1(0) \neq 0$. 
If $h_1 \equiv 1$ then we get $h(x) = x^d$ and $g(x) = x^{n_1/d} + Bx^{n_2/d} + Cx^{n_3/d}$. This corresponds to the first case  on our list.

Let us suppose that  
$h_1$ has a non-zero root $\xi$. Then from Lemma \ref{HAJ} we get that the multiplicity of $\xi$ as a root of $x^{n_1}+Bx^{n_2}+Cx^{n_3}$  
is less or equal than $2$, and therefore
$a_0 \in \{1,2\}$. We consider these two cases separately.

\textbf{Case 1: $a_0=1.$}\\
If $g(x) = x$ then we get trivial decomposition i.e. the second case on our list. Suppose that $\deg g \geq 2$. We have that
\begin{equation*}
 x^{n_1}+Bx^{n_2}+Cx^{n_3} = g(x^{n_3}h_1(x)).
\end{equation*}
Let us prove that $h_1(x) = h_2(x^{n_3})$ for some polynomial $h_2(x)$. Suppose otherwise, 
let $h_1(x)$ has non-zero coefficient $c_{\nu}$ at power $x^{\nu}$, $n_3 \nmid \nu$ and
$\nu$ is the smallest integer with this properties. Let us prove that $g(h(x))$ has at least four non-zero coefficients. We have
\begin{equation*}
 g(h(x)) = x^{n_3}h_1(x)(x^{n_3} h_1(x)-x_1)^{a_1} \cdots (x^{n_3}h_1(x)-x_k)^{a_k} = \sum_{i=1}^{\deg g} C_i (x^{n_3}h_1(x))^i,
\end{equation*}
where $g(x) = \displaystyle\sum_{i=1}^{\deg g} C_i x^i$. Polynomial $h_1(x)$ is not a monomial and thus $C_{\deg g}(x^{n_3}h_1(x))^{\deg g}$ 
has at least two-non zero coefficients at powers which cannot cancel with $C_{j}(x^{n_3}h_1(x))^{j}$ where $j<\deg g$. 
The coefficient at $x^{n_3 + \nu}$ in $g(h(x))$ is equal to $C_1c_v\neq 0$ it can't cancel as a coefficient at the lowest power which
is not divisible by $n_3$. So in that case $g(h(x))$ has at least four non-zero coefficients - a contradiction. We proved that $h_1(x) = h_2(x^{n_3})$.
As a consequence we get the equality 

\begin{equation*}
 x^{n_1}+Bx^{n_2}+Cx^{n_3} = g(x^{n_3}h_2(x^{n_3})).
\end{equation*}
Therefore $n_3|n_1, n_3|n_2$ say $m_1n_3=n_1, m_2n_3=n_2$ and 
\begin{equation}{\label{eq4}}
x^{m_1}+Bx^{m_2}+Cx = g(xh_2(x)).
 \end{equation}
Let us write $k=m_1-m_2$. We claim that $h_2(x) = h_3(x^k)$ for some $h_3 \in K[x]$. 
By comparison of coefficients in identity (\ref{eq4}) we get that 
\begin{equation*}
h_2(x) = x^t+\frac{B}{\deg g} x^{t-k} + \text{l.o.t.},
\end{equation*}
for some $t$. Let us prove that if a coefficient $D_{s}$ at $x^{s}$ in 
$h_2(x)$ is non-zero then $s \equiv t \pmod k$. Suppose otherwise, let $\nu$ be the highest power at which $h_2(x)$ has coefficient
$D_{\nu}$ which is non-zero and $\nu \not\equiv t \pmod k$. We have
\begin{equation*}
 x^{m_1}+Bx^{m_2}+Cx = x^{\deg g} \left(x^{t}+\frac{B}{\deg g}x^{t-k} + \text{l.o.t.}\right)^{\deg g} + \text{l.o.t.}.
\end{equation*}
Let us observe that the coefficient at $x^{\deg g+(\deg g-1)t+\nu}$ on the right hand side is equal to $D_{\nu}\deg g $. It cannot cancel
because it is the coefficient at the highest power $x^u$ which satisfies $u \neq \deg g + t \deg g \pmod k$.
Of course $m_2 = \deg g+(\deg g-1)t + t-k > \deg g+(\deg g-1)t+\nu > 1$ so we arrive at contradiction. We know that $h_2(0) \neq 0$ so we have
$h_2(x) = h_3(x^k)$. 

We thus proved that
\begin{equation*}
 g(xh_2(x)) = g(xh_3(x^k)) = x^{m_1}+Bx^{m_2}+C.
\end{equation*}
We prove that $g(x) = xg_2(x^k)$ for some polynomial $g_2(x)$. Suppose that this is not the case and put 
$g(x) = \displaystyle\sum_{i=1}^{\deg g} C_i x^i$. Let $\nu$ be the smallest integer such that $ \nu \not \equiv 1 \pmod k$ and $C_{\nu} \neq 0$. We have
\begin{equation*}
 \displaystyle\sum_{i=1}^{\deg g} C_i (xh_3(x^k))^i = x^{m_1} + Bx^{m_2}+C.
\end{equation*}
The coefficient at $x^{\nu}$ on the left hand side is equal to $C_\nu h_3(0) \neq 0$. Thus $\nu = m_1$ or $\nu = m_2$. On the other hand
\begin{equation*}
 m_2=m_1-k = \deg(g(xh_3(x))) -k \geq \nu (k+1) -k = (\nu-1)k + \nu > \nu, 
\end{equation*}
so we arrive at contradiction.

We have
\begin{equation*}
 xh_3(x^k)g_2(x^kh_3(x^k)^k) = x^{m_1}+Bx^{m_2}+Cx,
\end{equation*}
therefore $m_2-1 = kl$ and $m_1-1=k(l+1)$ for some $l$. In consequence we get
\begin{equation*}
 h_3(x)g_2(xh_3(x)^k) = x^{l+1}+Bx^{l}+C.
\end{equation*}
Let us prove that $k=1$. We write $g_2(x) = \sum_{j=0}^{\deg g_2} W_j x^j$, and get
\begin{equation*}
 (W_0h_3(x)-C) + (xh_3(x)^{k+1})\sum_{j=1}^{\deg g_2} W_j(xh_3(x)^k)^{j-1} = x^{l+1}+Bx^{l}.
\end{equation*}
We apply Theorem \ref{MST} to the above equation and get
\begin{equation*}
l+1 \leq \deg h_3 + (l+1 -  k\deg h_3) + 1 - 1 = l+1 -(k-1)\deg h_3, 
\end{equation*}
thus $(k-1) \deg h_3 \leq 0$ and $k=1$. We get that 
\begin{equation*}
 g(xh_2(x)) = xh_2(x)g_2(xh_2(x)) = x^{l+2}+Bx^{l+1}+Cx.
\end{equation*}
Let us write $F(x) = g(xh_2(x))$. From Lemma \ref{lem3} we get that there exists $\lambda \in K$ such that 
\begin{equation*}
\deg(\gcd(F(x)-\lambda, F'(x)) \geq \deg xh_2(x).
\end{equation*}
We apply Theorem \ref{MST} to the equation
\begin{equation*}
 F(x) - \lambda = (x^{l+2}+Bx^{l+1})+(Cx-\lambda),  
\end{equation*}
and get
\begin{equation*}
 l+2 \leq 2+1+ (\deg (F(x) -\lambda) - \deg(\gcd(F(x)-\lambda, F'(x))) -1 \leq l+4 - \deg (xh_2(x)), 
\end{equation*}
therefore $\deg h_2(x) \leq 1$. Let $h_2(x) = x+c$ for some non-zero $c$, then we have
\begin{equation*}
 g(x(x+c)) = x^{l+2}+Bx^{l+1}+Cx.
\end{equation*}
The left hand side is symmetric with respect to the line $x = - \frac{c}{2}$, and thus so is right hand side. We get
\begin{equation*}
 x^{l+2}+Bx^{l+1}+Cx = (-x-c)^{l+2}+B(-x-c)^{l+1}+C(-x-c).
\end{equation*}
We compute second derivative of both sides and get
\begin{equation*}
 (l+2)(l+1)x^{l}+B(l+1)lx^{l-1} = (l+2)(l+1)(-x-c)^{l}+B(l+1)l(-x-c)^{l-1}.
\end{equation*}
which is equivalent to
\begin{equation*}
 ((l+2)x+Bl)x^{l-1} = (x+c)^{l-1}((l+2)(x+c)-Bl),
\end{equation*}
as $c\neq 0$ we have $l=2$. As a consequence we get $\deg g = 2$, and $g(x) = x(x+b)$ for some non-zero $b$. Finally we have that the coefficient at $x^2$
in $g(x(x+c))$ is equal to zero which implies $b=-c^2$. Summing up: in case of $a_0=1$ we get the solution of $g(h(x))=f(x)$ of the following form 
\begin{equation*}
 g(x) = x^2-c^2x, \quad h(x) = x^{n_3}(x^{n_3}+c)
\end{equation*}
which corresponds to the third case on our list.

\textbf{Case 2: $a_0=2$.}

We know that $a_0d = n_3$ therefore $d = \frac{n_3}{2}$. If $g(x) = x^2$ then $h(x)^2 = x^{n_1}+Bx^{n_2} + Cx^{n_3}$,
and $h_1(x)^2 = x^{n_1-n_3} + Bx^{n_2-n_3}+C$. From Lemma \ref{pierwiastek_z_trojmianu} we get that $h(x)$ is a binomial, which corresponds to the second case on our
list.

Now let $g(x) \neq x^2$, so $g$ has at least one non-zero root, and $\deg g(x) \geq 3$. 
Let us prove that $h_1(x) = h_2(x^{\frac{n_3}{2}})$, for some monic polynomial $h_2$. Suppose that this is not the case and assume that 
$h_1(x)$ has a non-zero coefficient $c_{\nu}$ at power $x^{\nu}$, $d = \frac{n_3}{2}  \nmid \nu$. Choose 
$\nu$ as the smallest integer with this property. We prove that $g(h(x))$ has at least four non-zero coefficients. We have
\begin{equation*}
 g(h(x)) = x^{n_3}h_1(x)^{2}(x^d h_1(x)-x_1)^{a_1} \cdots (x^dh_1(x)-x_k)^{a_k} = \sum_{i=2}^{\deg g} C_i (x^dh_1(x))^i,
\end{equation*}
where $g(x) = \displaystyle\sum_{i=2}^{\deg g} C_i x^i$. Polynomial $h_1(x)$ is not a monomial and thus $C_{\deg g}(x^dh_1(x))^{\deg g}$ 
has at least two-non zero coefficients, at powers which cannot cancel with $C_{j}(x^dh_1(x))^{j}$ where $j<\deg g$. 
Moreover, the coefficient at $x^{n_3 + \nu}$ in $g(h(x))$ is equal to $2C_2c_vh_1(0) \neq 0$. It cannot cancel as a coefficient at the lowest power which
is not divisible by $d$. So in that case $g(h(x))$ has at least four non-zero coefficients - a contradiction. We proved that $h_1(x) = h_2(x^{n_3/2})$
and thus
\begin{equation*}
 x^{n_1}+Bx^{n_2}+Cx^{n_3} = g(x^{d}h_2(x^d)),
\end{equation*}
where $d=n_3/2$. Therefore $d|n_1, d|n_2$ say $m_1d=n_1, m_2d=n_2$ and 
\begin{equation}{\label{eq1}}
x^{m_1}+Bx^{m_2}+Cx^2 = g(xh_2(x)).
 \end{equation}
Let us write $k=m_1-m_2$. We claim that $h_2(x) = h_3(x^k)$. By comparison of coefficients in the equality (\ref{eq1}) we get that
\begin{equation*}
h_2(x) = x^t+\frac{B}{\deg g} x^{t-k} + \text{l.o.t.},
\end{equation*}

for some $t$. Let us prove that if a coefficient $D_{s}$ at $x^{s}$ in 
$h_2(x)$ is non-zero then $s \equiv t \pmod k$. Suppose otherwise, let $\nu$ be the highest power at which $h_2(x)$ has coefficient
$D_{\nu}$ which is non-zero and $\nu \not\equiv t \pmod k$. We have
\begin{equation*}
 x^{m_1}+Bx^{m_2}+Cx^2 = x^{\deg g} \left(x^{t}+\frac{B}{\deg g}x^{t-k} + \text{l.o.t.} \right)^{\deg g} + \text{l.o.t.}.
\end{equation*}
Let us observe that the coefficient at $x^{\deg g+(\deg g-1)t+\nu}$ on the right hand side is equal to $D_{\nu}\deg g $. It cannot cancel
because it is the coefficient at the highest power $x^u$ which satisfies $u \neq \deg g + t \deg g \pmod k$.
Of course $m_2 = \deg g+(\deg g-1)t + t-k > \deg g+(\deg g-1)t+\nu > 2$ so we arrive at contradiction. We know that $h_2(0) \neq 0$ and thus
$h_2(x) = h_3(x^k)$. 

We can write
\begin{equation*}
 x^2h_3(x^k)^2| g(xh_2(x)) = x^{m_1}+Bx^{m_2}+Cx^2, 
\end{equation*}
therefore
\begin{align*}
 h_3(x^k) &| \frac{d}{dx} (x^{m_1-2}+Bx^{m_2-2}+C) \\ &= (m_1-2)x^{m_1-3}+(m_2-2)Bx^{m_2-3} \\ &= x^{m_2-3}((m_1-2)x^k+(m_2-2)B).  
\end{align*}
We know that $h_3(0) \neq 0$ so $h_3(x^k) | (m_1-2)x^k+(m_2-2)B$, in particular $ \deg h_3 \leq 1$. If $h_3(x)$ is constant then so is $h(x)$
and we arrive at contradiction. 

Suppose that $h_3(x)$ is linear then $h_2(x) = x^k+c$ for some non-zero $c$. Let us write $g(x) = x^2g_2(x)$. We then have
\begin{equation*}
 x^{m_1}+Bx^{m_2}+Cx^2 = x^2(x^k+c)^2 g_2(x(x^k+c)),
\end{equation*}
and thus
\begin{equation*} 
 x^{m_1-2}+Bx^{m_2-2}+C = (x^k+c)^2 g_2(x(x^k+c)),
\end{equation*}
Let us prove that $g_2(x) = g_3(x^k)$ for some $g_3(x)$. Suppose that this is not the case.  
Let $x^u$ be the lowest power such that $g_2$ has non-zero coefficient at 
$x^u$ and $ u \not \equiv 0 \pmod k$ then we have that the coefficient at $x^u$ in $(x^k+c)^2 g_2(x(x^k+c))$ is non-zero therefore $u =m_2-2$ or $u=m_1-2$
in both cases we have
\begin{equation*}
 m_2-2 = (m_1-2)-k = (2k+ \deg g_2 \cdot (k+1))-k \geq k + u (k+1) \geq k+u \geq k+(m_2-2) > m_2-2.  
\end{equation*}
So we have
\begin{equation*}
 x^{m_1-2}+Bx^{m_2-2}+C = (x^k+c)^2g_3(x^k(x^k+c)^k),
\end{equation*}
in particular $k|m_1-2$ and $k|m_2-2$, say $k(s+1) = m_1-2$. We have
\begin{equation}\label{eq2}
 x^{s+1}+Bx^{s} + C = (x+c)^2g_3(x(x+c)^k).
\end{equation}
Let us put $g_3(x) = \sum_{j=0}^{\deg g_3} W_j x^j$, and write
\begin{equation*}
 x^{s+1}+Bx^{s} = (W_0(x+c)^2-C) + x(x+c)^{k+2} \left(\sum_{j=1}^{\deg g_3} W_j (x(x+c)^k)^{j-1}\right).
\end{equation*}
We apply Theorem \ref{MST} to the above equation and get
\begin{equation*}
 s+1 \leq (2 + (s+1-(k+2)) + 2) -1, 
\end{equation*}
so $k=1$. We plug this information into equation (\ref{eq2}) and get
\begin{equation}\label{eq3}
  x^{s+1}+Bx^{s}+C = (x+c)^2 g_2(x(x+c)).
\end{equation}
In consequence
\begin{equation*}
 x^{s+3}+Bx^{s+2}+Cx^2 = (x+c)^2x^2 g_2(x(x+c)), 
\end{equation*}
the left hand side is symmetric with respect to the line $x=-\frac{c}{2}$, and thus so is right hand side
\begin{equation*}
 x^{s+3}+Bx^{s+2}+Cx^2 = (-x-c)^{s+3}+B(-x-c)^{s+2}+C(-x-c)^2.
\end{equation*}
We compute second derivative of both sides and get 
\begin{equation*}
 (s+3)(s+2)x^{s+1}+(s+2)(s+1)Bx^{s}= (s+3)(s+2)(-x-c)^{s+1}+(s+2)(s+1)B(-x-c)^{s}.
\end{equation*}
If $s>1$ then the only multiple root of left hand side is $x=0$, and only multiple root of the right hand side is $x=-c$, thus $s=1$.
By comparing degrees in equation (\ref{eq3}) we get that $ \deg g_2 = 0$, contradiction as $g_2(x)$ has non-zero root.

\end{proof}

\section{A Diophantine equation}
In this section we give sufficient condition on quadrinomials to have only finitely many common integral points.
We recall Bilu-Tichy result. To state this theorem we will need the notion of standard pairs over $\mathbb{Q}$.
We list standard pairs of polynomials over $\mathbb{Q}$ in the table
\vskip 0.2 in

\begin{center}

\begin{tabular}{|c|c|c|}
\hline
kind & standard pair (or switched) & parameter restrictions \\
\hline
first & $(x^m, ax^rp(x)^m)$ & $r<m$, $\gcd(r,m)=1$, $r+ \deg p > 0$ \\
\hline
second & $(x^2,(ax^2+b)p(x)^2)$ & \\
\hline
third & $(D_m(x,a^n),D_n(x,a^m))$ & $\gcd(m,n) = 1$ \\
\hline
fourth & $(a^{-m/2}D_m(x,a),-b^{-n/2}D_n(x,b))$ & $\gcd(m,n)=2$ \\
\hline
fifth & $((ax^2-1)^3, 3x^4-4x^3)$ & \\
\hline
\end{tabular}

\end{center}

\vskip 0.2 in

where $a,b$ are non-zero rationals, $m,n$ are positive integers, $r$ is non-negative integer, $p \in \mathbb{Q}[x]$ is a polynomial (which may be
constant) and $D_n(x,a)$ is the $n$-th Dickson polynomial with parameter $a$ given by the formula
\begin{equation*}
 D_n(x,a) = \sum_{i=0}^{[n/2]} \frac{n}{n-i} \binom{n-i}{i}(-a)^i x^{n-2i}.
\end{equation*}

Now we are ready to recall the theorem of Bilu and Tichy \cite{BTCH}

\begin{thm}\label{BTT}
 Let $f,g \in \mathbb{Q}[x]$ be non-constant polynomials. Then the following assertions are equivalent
 \begin{itemize}
  \item The equation $f(x) = g(y)$ has infinitely many rational solutions with a bounded denominator.
  \item We have 
  \begin{equation*}
  f(x) = \varphi (f_1(\lambda(x))), \quad g(x) = \varphi(g_1(\mu(x))), 
  \end{equation*}
where $\varphi \in \mathbb{Q}[x]$, $\lambda, \mu \in \mathbb{Q}[x]$ are linear polynomials, and $(f_1,g_1)$ is a standard
pair over $\mathbb{Q}$ such that the equation $f_1(x) = g_1(y)$ has infinitely many rational solutions with a bounded denominator.
 \end{itemize}

\end{thm}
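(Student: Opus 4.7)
The plan is to establish the equivalence by two implications. For the easy direction (decomposition implies infinite solubility), suppose $f = \varphi \circ f_1 \circ \lambda$ and $g = \varphi \circ g_1 \circ \mu$. Any rational solution $(X,Y)$ with bounded denominator of $f_1(X) = g_1(Y)$ transports via $(x,y) = (\lambda^{-1}(X),\mu^{-1}(Y))$ to a rational solution of $f(x) = g(y)$ with controlled denominator, since $\lambda,\mu$ are linear over $\mathbb{Q}$. It therefore suffices to verify that each of the five standard pair types admits infinitely many rational solutions of bounded denominator, and this is a direct case-by-case check: every such pair was built to satisfy a polynomial parametric identity, e.g.\ via $D_n(u+au^{-1},a) = u^n + a^n u^{-n}$ in the Dickson cases, and via a suitable substitution involving $t^r p(t)$ in the first case.

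For the converse, suppose $f(x)=g(y)$ has infinitely many rational solutions with a bounded denominator; equivalently, the affine plane curve $C \colon f(x)-g(y)=0$ has infinitely many $S$-integral points for some finite set of primes $S$. Factor $f(x)-g(y) = \prod_i F_i(x,y)$ into absolutely irreducible components over $\overline{\mathbb{Q}}$; each $F_i$ is defined over some number field, and by pigeonhole some component $F_j$ carries infinitely many $S$-integral points. Siegel's theorem on $S$-integral points on curves then forces the smooth projective model of $\{F_j=0\}$ to have genus zero and at most two points at infinity.

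The real content, and the main obstacle, is the classification of those pairs $(f,g) \in \mathbb{Q}[x]^2$ for which $f(x)-g(y)$ admits such a geometrically exceptional factor. The key input is a theorem of Fried translating the factorization condition into the existence of a common polynomial composition factor $\varphi$ with $f = \varphi \circ \tilde f$, $g = \varphi \circ \tilde g$, where $\tilde f(x) - \tilde g(y)$ is itself absolutely irreducible of genus zero with at most two places at infinity. One then has to enumerate all such pairs $(\tilde f,\tilde g)$ up to linear substitutions. This proceeds by analysing the Puiseux expansions of $\tilde f$ and $\tilde g$ at infinity, in combination with Ritt's theorems on polynomial decompositions and explicit identities among Dickson polynomials; the combinatorics of the at-most-two-places-at-infinity constraint cuts the possibilities down to precisely the five types displayed in the table. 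Absorbing the outer composition factor $\varphi$ and the linear conjugations $\lambda,\mu$ then recovers the stated form. The genuine difficulty is entirely concentrated in this last classification step; the reduction to it from the infinite-solutions hypothesis is a comparatively routine, if non-trivial, application of Siegel's theorem.
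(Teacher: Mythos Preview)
The paper does not prove this statement: Theorem~\ref{BTT} is simply \emph{quoted} as the main result of Bilu and Tichy \cite{BTCH}, with no argument given. There is therefore no ``paper's own proof'' to compare your sketch against.

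Your outline is a fair high-level summary of the strategy in \cite{BTCH}: the reverse implication is immediate from the linear substitutions, and the forward implication combines Siegel's theorem (to force an exceptional factor of genus zero with at most two points at infinity) with a Fried/Ritt-type analysis classifying such pairs up to the common outer factor $\varphi$. One small inaccuracy: in the easy direction you write that ``it therefore suffices to verify that each of the five standard pair types admits infinitely many rational solutions of bounded denominator.'' This is not needed and not quite correct. The second bullet already \emph{assumes} that $f_1(x)=g_1(y)$ has infinitely many such solutions; you only need to transport those via $\lambda^{-1},\mu^{-1}$, which you already did in the previous sentence. Not every standard pair automatically has infinitely many bounded-denominator solutions, and the theorem does not claim so.

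For the purposes of this paper, however, the appropriate ``proof'' is a one-line citation of \cite{BTCH}; reproducing the classification argument would be well outside the paper's scope.
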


Before stating the main theorem we prove some lemmas concerning decompositions of certain polynomials. As a main ingredient
in proofs we will use the classical theorem of Gessel and Viennot~\cite{GV} concerning determinants with binomial coefficients. 
\begin{thm}\label{GVL}
 Let $0\leq a_1 < a_2 < \ldots < a_n$ and $0 \leq b_1 < b_2 < \ldots < b_n$ be strictly increasing sequences of non-negative integers. Then the determinant
 \begin{equation*}
  \det \left( \left[\binom{a_i}{b_j}\right]_{i,j=1,2,\ldots,n} \right) 
 \end{equation*}
is non-negative, and positive iff $b_i \leq a_i$ for all $i$. 
\end{thm}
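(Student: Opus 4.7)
The plan is to realise the entries as lattice path counts and to apply the Lindström--Gessel--Viennot lemma. Place the sources at $A_i=(0,-a_i)$ on the $y$-axis and the sinks at $B_j=(b_j,-b_j)$ on the line $x+y=0$. A monotone east/north lattice path from $A_i$ to $B_j$ must use exactly $b_j$ east steps and $a_i-b_j$ north steps, which is possible precisely when $a_i\geq b_j$; in that case the number of such paths equals $\binom{a_i}{b_j}$, matching the $(i,j)$ entry (and zero otherwise).

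Next I would invoke LGV, writing
\[
\det\!\left(\binom{a_i}{b_j}\right)_{i,j=1}^n \;=\; \sum_{\sigma\in S_n}\mathrm{sgn}(\sigma)\,|\mathcal{P}(\sigma)|,
\]
where $\mathcal{P}(\sigma)$ is the set of vertex-disjoint $n$-tuples of paths with the $i$-th path going $A_i\to B_{\sigma(i)}$. The key geometric step is the crossing lemma: since $a_1<\cdots<a_n$ orders the sources top-to-bottom along the $y$-axis and $b_1<\cdots<b_n$ orders the sinks from upper-left to lower-right along $x+y=0$, any two monotone paths $A_i\to B_{\sigma(i)}$ and $A_k\to B_{\sigma(k)}$ with $i<k$ but $\sigma(i)>\sigma(k)$ are forced to share a lattice point. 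Hence $|\mathcal{P}(\sigma)|=0$ for $\sigma\neq\mathrm{id}$, and the determinant equals $|\mathcal{P}(\mathrm{id})|\geq 0$.

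For the positivity characterisation, note that $|\mathcal{P}(\mathrm{id})|>0$ requires at least one single path $A_i\to B_i$ for every $i$, which forces $b_i\leq a_i$. Conversely, when $b_i\leq a_i$ for all $i$, I would exhibit an explicit non-intersecting system: let the $i$-th path go east along $y=-a_i$ from $(0,-a_i)$ to $(b_i,-a_i)$ and then north along $x=b_i$ up to $(b_i,-b_i)$. For $i<k$ the horizontal segments sit at distinct heights $-a_i\neq -a_k$, the vertical segments sit at distinct abscissae $b_i\neq b_k$, and a direct comparison using $a_i<a_k$ and $b_i<b_k$ rules out the two remaining mixed intersections, so the system is pairwise disjoint and the determinant is strictly positive.

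The main obstacle is the crossing lemma used to kill every non-identity permutation in the LGV sum. Though intuitively clear from the convex-position layout of the sources on the $y$-axis and the sinks on the antidiagonal, it must be formalised carefully, perhaps by taking the first lattice point shared by the two paths and performing a tail-swap to contradict minimality; once this is in place both non-negativity and the explicit disjoint construction for positivity follow immediately.
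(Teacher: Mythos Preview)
Your proposal is correct and is precisely the standard Lindstr\"om--Gessel--Viennot argument. The paper itself does not prove this statement at all; it is quoted as the classical theorem of Gessel and Viennot and cited to \cite{GV}, so there is no in-paper proof to compare against. Your lattice-path realisation, the planarity/crossing argument forcing $\sigma=\mathrm{id}$, and the explicit ``east-then-north'' non-intersecting system for the positivity direction are exactly the ingredients of the original proof in the cited reference; the only point you flag as an obstacle (the crossing lemma) is handled there by the usual tail-swap involution, which you also sketch.
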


\begin{lem}\label{DZIURY}
 Let $f,g \in \mathbb{Q}[x]$ be polynomials with rational coefficients, and $u,v \in \mathbb{Q}$ be non-zero rationals such that
\begin{equation*}
 f(x) = g(ux+v).
\end{equation*}
Suppose that $g(x)$ has exactly $l$ non-zero terms and $f(x)$ has exactly $k$ non-zero terms, and $n = \deg f = \deg g$. Then the following
inequality holds
\begin{equation*}
n+2 \leq k+l. 
\end{equation*}
\end{lem}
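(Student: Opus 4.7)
The plan is to argue by contradiction using Theorem \ref{GVL}. Suppose $k+l\le n+1$. Write the supports as $S=\{s_1<\dots<s_l\}$ and $T=\{t_1<\dots<t_k\}$; since $\deg f=\deg g=n$, we have $s_l=n$ and $n\in T$, so $T^c\subseteq\{0,\dots,n-1\}$. Expanding $g(x)=\sum_{i\in S}c_ix^i$ in $f(x)=g(ux+v)$ gives
\begin{equation*}
 f_j=\sum_{i\in S,\,i\ge j}c_i\binom{i}{j}u^jv^{i-j},
\end{equation*}
and the conditions $f_j=0$ for $j\in T^c$ say that $(c_{s_1},\dots,c_{s_l})$ (a vector with all entries nonzero) lies in the kernel of the matrix whose $(j,i)$-entry is $\binom{i}{j}u^jv^{i-j}$, indexed by $j\in T^c$ and $i\in S$. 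Since $|T^c|=n+1-k\ge l$, I pick any $l$-element subset $T'=\{t'_1<\dots<t'_l\}\subseteq T^c$ and let $N$ be the resulting $l\times l$ submatrix, with $(\rho,\sigma)$-entry $N_{\rho,\sigma}=\binom{s_\sigma}{t'_\rho}u^{t'_\rho}v^{s_\sigma-t'_\rho}$ (and the convention $\binom{a}{b}=0$ for $b>a$). The vector $(c_{s_\sigma})_\sigma$ still satisfies $N(c_{s_\sigma})_\sigma=0$.

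Factoring $(u/v)^{t'_\rho}$ out of row $\rho$ and $v^{s_\sigma}$ out of column $\sigma$ reduces $\det N$ to a nonzero scalar times $\det\bigl[\binom{s_\sigma}{t'_\rho}\bigr]_{\rho,\sigma}$, which by Theorem \ref{GVL} is nonzero iff $t'_\rho\le s_\rho$ for every $\rho$. If this ordering holds, $N$ is invertible and forces $(c_{s_\sigma})_\sigma=0$, contradicting $c_{s_\sigma}\ne 0$. Otherwise set $\sigma^*:=\max\{\rho:t'_\rho>s_\rho\}$. Because $t'_l\le n-1<n=s_l$, we have $\sigma^*\le l-1$. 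For $\rho>\sigma^*$ and $\sigma\le\sigma^*$ the chain $t'_\rho\ge t'_{\sigma^*+1}>t'_{\sigma^*}>s_{\sigma^*}\ge s_\sigma$ forces $\binom{s_\sigma}{t'_\rho}=0$, so $N$ is block lower-triangular with bottom-right block $D$ of size $(l-\sigma^*)\times(l-\sigma^*)$ on rows $\{t'_{\sigma^*+1},\dots,t'_l\}$ and columns $\{s_{\sigma^*+1},\dots,s_l\}$. The maximality of $\sigma^*$ guarantees $t'_{\sigma^*+\rho}\le s_{\sigma^*+\rho}$ for every $\rho=1,\dots,l-\sigma^*$, so the same factoring and Theorem \ref{GVL} applied to $D$ give $\det D\ne 0$. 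Reading the lower block of $N(c_{s_\sigma})_\sigma=0$ then forces $c_{s_{\sigma^*+1}}=\dots=c_{s_l}=0$, and in particular $c_n=c_{s_l}=0$, contradicting $\deg g=n$.

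The main obstacle I expect is spotting this block-triangular structure at the maximal Gessel--Viennot failure $\sigma^*$: a direct application of Theorem \ref{GVL} to the full $l\times l$ matrix $N$ only works when the Hall-type inequalities $t'_\rho\le s_\rho$ happen to hold for every $\rho$, which is not automatic (for instance with $S=\{0,3,4\}$ and $T^c=\{1,2,3\}$ they fail at $\rho=1$). The key observation that rescues the argument is that at the largest index where these inequalities fail, the lower-left block of $N$ vanishes identically, so one can apply Gessel--Viennot to the strictly smaller bottom-right block $D$ instead; the indices inside $D$ satisfy the Hall ordering by the very choice of $\sigma^*$, and this is precisely what is needed to kill the leading coefficient $c_n$.
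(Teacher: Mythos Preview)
Your proof is correct and follows the same route as the paper: reduce the vanishing of the $f$-coefficients to a linear system with binomial-coefficient entries and invoke the Gessel--Viennot determinant criterion (Theorem~\ref{GVL}). Two minor remarks. First, the block you isolate is block \emph{upper}-triangular (the zero block sits in rows $\rho>\sigma^*$ and columns $\sigma\le\sigma^*$, i.e.\ bottom-left), not lower-triangular; the subsequent use of the bottom rows of $N(c_{s_\sigma})_\sigma=0$ is unaffected. Second, your careful treatment of the failure case via $\sigma^*$ is actually an improvement over the paper's argument: the paper sets $t=\max\{i:n_i\ge m_i\}$ (in its decreasing indexing) and asserts that the $t\times t$ minor is nonzero by Theorem~\ref{GVL}, but that requires $n_i\ge m_i$ for \emph{all} $i\le t$, which need not hold (e.g.\ $n=(10,3,2)$, $m=(9,5,1)$ gives $t=3$ yet the $3\times 3$ determinant vanishes). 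Your choice of $\sigma^*$ as the \emph{last} index where the Hall-type inequality fails guarantees that the bottom-right block satisfies the full Gessel--Viennot condition, and the block-triangular structure then kills the leading coefficient as you describe.
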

\begin{proof}
We put $g(x) = \displaystyle \sum_{i=1}^l C_{n_i} x^{n_i}$, where $(n_i)_{i=1}^l$ is a decreasing sequence of non-zero integers, and $C_{n_i}$ are non-zero rationals. The coefficient at $x^j$ in the polynomial $g(ux+v)$ is equal to 
\begin{equation*}
 u^jv^{-j} \sum_{i=1}^l C_{n_i} \binom{n_i}{j} v^{n_i}. 
\end{equation*}
Let us suppose that this coefficient vanish for $j=m_1,m_2,\ldots,m_s$, where $s = n+1-k \geq l$ and $(m_i)_{i=1}^s$ is a decreasing sequence of non-zero integers.  We observe that the vector $(C_{n_1}v^{n_1},\ldots,C_{n_l}v^{n_l})$ is perpendicular to every row
of the matrix
\begin{equation*}
 \left[ \binom{n_i}{m_j} \right]_{i=1,\ldots,l, j = 1,\ldots, s}.
\end{equation*}
We put $t = \max \{  i | n_i \geq m_i,  1\leq i \leq l \} $, of course $n = n_1 > m_1$ and so $t$ is well-defined. From Lemma \ref{GVL} we get that the determinant of the matrix 
\begin{equation*}
 M = \left[ \binom{n_i}{m_j} \right]_{i=1,\ldots,t, j = 1,\ldots, t}
\end{equation*}
is non-zero. Moreover the vector $(C_{n_1}v^{n_1},\ldots,C_{n_t}v^{n_t})$ is in the kernel of $M$, therefore $v=0$. Which
contradicts the assumption $v \neq 0$. We proved that $s = n+1-k < l$, therefore $n+2 \leq k+l$.  
\end{proof}

\begin{lem}\label{DICKSON}
Let $n_1>n_2>\ldots >n_s>0$ be positive integers and $A_1,A_2,\ldots,A_{s+1}$ be rational numbers such that $A_1A_2\ldots A_s \neq 0$, 
and put $f(x) = A_1x^{n_1}+\ldots+A_sx^{n_s}+A_{s+1}$. If the equation   
 \begin{equation*}
 D_{n_1}(x, \gamma) = f(ux+v)
 \end{equation*}
 holds for some $u,v, \gamma \in \mathbb{Q}$ such that $u\gamma \neq 0$,
 then $n_1 \leq 2s$. 
\end{lem}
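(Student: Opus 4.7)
The plan is to count non-zero monomials on both sides of the identity $D_{n_1}(x,\gamma) = f(ux+v)$ and then invoke Lemma~\ref{DZIURY}. The first observation I would record is that, since $\gamma \neq 0$, the coefficient $\frac{n_1}{n_1-i}\binom{n_1-i}{i}(-\gamma)^i$ of $x^{n_1-2i}$ in $D_{n_1}(x,\gamma)$ is non-zero for every $0 \leq i \leq \lfloor n_1/2 \rfloor$, so $D_{n_1}(x,\gamma)$ has exactly $\lfloor n_1/2 \rfloor + 1$ non-zero terms. On the other side, $f$ has at most $s+1$ non-zero terms (and at least $s$, depending on whether $A_{s+1}$ vanishes).

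I would then split into two cases depending on whether $v=0$, since Lemma~\ref{DZIURY} requires $v \neq 0$. When $v \neq 0$, the lemma applied to $D_{n_1}(x,\gamma) = f(ux+v)$ (with $D_{n_1}$ playing the role of the outer polynomial $f$ in the lemma statement and our $f$ playing the role of $g$) yields
\begin{equation*}
 n_1 + 2 \leq \bigl(\lfloor n_1/2 \rfloor + 1\bigr) + (s+1),
\end{equation*}
which rearranges to $\lceil n_1/2\rceil \leq s$, and hence $n_1 \leq 2s$.

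When $v = 0$ the identity $D_{n_1}(x,\gamma) = f(ux)$ forces the set of exponents with non-zero coefficient in $f$ to agree with that of $D_{n_1}(x,\gamma)$. If $n_1$ is even this support has $n_1/2 + 1$ elements including $0$, so $A_{s+1} \neq 0$ and $s + 1 \geq n_1/2 + 1$; if $n_1$ is odd the support consists of $(n_1+1)/2$ positive exponents, forcing $s \geq (n_1+1)/2$. In both parities one obtains $n_1 \leq 2s$.

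The computation is essentially routine once the right bookkeeping is in place; the only slightly delicate point is that Lemma~\ref{DZIURY} does not cover the case $v=0$ and one must instead compare supports directly there. No deeper tool beyond Lemma~\ref{DZIURY} (and through it, the Gessel--Viennot determinant bound of Theorem~\ref{GVL}) appears to be needed.
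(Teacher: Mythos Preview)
Your proposal is correct and follows essentially the same approach as the paper: split into the cases $v=0$ and $v\neq 0$, apply Lemma~\ref{DZIURY} in the latter, and compare the number of non-zero terms directly in the former. Your handling of the $v=0$ case via an explicit parity split is slightly cleaner than the paper's, which first obtains $n_1\leq 2s+1$ and then rules out $n_1=2s+1$, but the substance is the same.
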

\begin{proof}
 If $v=0$ then we get 
 \begin{equation*}
 D_{n_1}(x, \gamma) = \sum_{i=0}^{[n_1/2]} \frac{n_1}{n_1-i} \binom{n_1-i}{i}(-\gamma)^i x^{n_1-2i} = f(ux).
 \end{equation*}
The polynomial on the left hand side has exactly $[n_1/2]+1$ non-zero coefficients, while $f(ux)$ has $s$ or $s+1$ non-zero coefficients. In consequence
$[n_1/2] +1 \leq s+1$ so $n_1 \leq 2s+1$. It can't be $n_1=2s+1$ because in that case $A_{s+1} = 0$ and $f(ux)$ has $s$ non-zero coefficients
so $[n_1/2]+1 \leq s$ and thus $n_1 \leq 2s-1$.

Suppose that $ v \neq 0$ and $n_1 \geq 2s+1$. From Lemma \ref{DZIURY} we get that 
\begin{equation*}
\deg D_{n_1}(x,\gamma) = n_1 \leq [n_1/2]+1+(s+1)-2 = [n_1/2]+s, 
\end{equation*}
therefore $n_1 \leq 2s$.

\end{proof}

Now we are ready to state the main theorems of this section.

\begin{thm*}[A] Let $f(x) = Ax^{n_1}+Bx^{n_2}+Cx^{n_3}+D$, $g(x) = Ex^{m_1}+Fx^{m_2}+Gx^{m_3}+H$ with $f,g \in \mathbb{Q}[x]$, $n_1>n_2>n_3>0$, $m_1>m_2>m_3>0$, and 
 $\gcd(n_1,n_2,n_3) = 1$, $\gcd(m_1,m_2,m_3)=1$, $(m_1,m_2,m_3) \neq (n_1,n_2,n_3)$, $ABC \neq 0$, $EFG \neq 0$ and $n_1,m_1 \geq 9$. Then the equation
 \begin{equation*}
  f(x) = g(y)
 \end{equation*}
has only finitely many integer solutions.
\end{thm*}
\begin{proof}
 If the equation $f(x) = g(y)$ has infinitely many integer solutions, then 
 \begin{equation*}
  f = \varphi \circ f_1 \circ \lambda, \quad g = \varphi \circ g_1 \circ \mu, 
 \end{equation*}
where $\varphi, \mu, \lambda, f_1,g_1 \in \mathbb{Q}[x]$ and $(f_1,g_1)$ is a standard pair, $\mu, \lambda$ are linear polynomials.

Let us consider pairs of the first kind. From symmetry we can assume that
\begin{equation*}
 f_1(x) = x^m, \quad g_1(x) = ax^rp(x)^m  
\end{equation*}
for $a \in \mathbb{Q} \setminus \{0\}$, $0 \leq r < m$, $\gcd(r,m)=1$, $p(x) \in \mathbb{Q}[x]$ and $r+\deg p > 0$. From Theorem \ref{DECOMP} we get
that $\deg \phi \leq 2$ or $\deg f_1 = \deg g_1 =1$. If $\deg \phi = 1$ then we have
\begin{equation*}
\varphi^{-1} \circ f(x) =  x^m \circ \lambda = (ux+v)^m
\end{equation*}
for some $u,v \in \mathbb{Q}$. The polynomial $\varphi^{-1} \circ f$ has at least three non-zero terms so $v \neq 0$. Therefore $\varphi^{-1} \circ f$
has exactly four non-zero terms and $(ux+v)^m$ has exactly $m+1$ non-zero terms. Therefore $m= \deg f = 3$, this is a contradiction with $\deg f \geq 9$.

If $\deg \varphi = 2$. Then from Theorem \ref{DECOMP} we get that $x^m \circ \lambda$ has two or three non-zero terms. As in the previous case
we get that $m=1$ or $m=2$ therefore $\deg f \leq 4$ this contradicts the assumption $\deg f \geq 9$. 

If $\deg f_1 = \deg g_1 = 1$ then $m=1$, $r=0$, $\deg p = 1$. So $f = g \circ l$ for some linear $l$, from Lemma \ref{DZIURY} 
we get that either $(m_1,m_2,m_3) = (n_1,n_2,n_3)$ or $\deg f \leq 4+4-2=6$ - a contradiction. 

Let us consider pairs of the second kind. From symmetry we can assume that $f_1(x) = x^2$, but then from Theorem \ref{DECOMP} we get that $\deg (\varphi) \leq 2$
and therefore $\deg f \leq 4$ - a contradiction with $\deg f \geq 9$. 

Let us consider pairs of the fifth kind. From symmetry we can assume that $f_1(x) = 3x^4-4x^3, g_1 = (ax^2-1)^3$ from Theorem \ref{DECOMP}
we get that $\deg \phi \leq 2$. If $\deg \varphi = 1$ then $\deg f = 4$ contradiction with $\deg f \geq 9$. If $\deg \varphi = 2$ then from Theorem 
\ref{DECOMP} we get that $(ax^2-1)^3 \circ \mu $ has two or three non-zero terms. Let us write $\mu(x) = ux+v$ then we have
$(ax^2-1)^3 \circ \mu = (au^2x^2+2auvx+av^2-1)^3$ has non-zero triple root (note that $au^2x^2+2auvx+av^2-1=au^2x^2$ implies $av^2=1$ and $auv = 0$
contradiction because $u \neq 0$), therefore from Lemma \ref{HAJ} we get a contradiction. 

Let us consider pairs of the third and fourth kind. In both cases we have $f_1 = aD_m(x, \gamma)$ for some 
$a, \gamma \in \mathbb{Q} \setminus \{0\}$. From Theorem \ref{DECOMP} we get that
$ \deg \varphi \leq 2$. If $\deg \varphi = 1$ then we have $D_m(x, \gamma) = \varphi^{-1} \circ f \circ \lambda^{-1}$ and from Lemma \ref{DICKSON} 
we get that $\deg f = m \leq 6$ a contradiction. 
If $ \deg \varphi = 2$ then $f_1 \circ \lambda$ has two or three non-zero terms. Therefore from Lemma  \ref{DICKSON} we get that
$\deg f_1 \leq 4$, so $\deg f \leq 8$ - a contradiction. 

\end{proof}

To prove theorem concerning equations with lacunary polynomials we will use the following result of Zannier \cite{ZAN}.

\begin{thm}[Zannier]
 Suppose that $g, h \in \mathbb{C}[x]$ are non-constant polynomials, that $h(x)$ is not of the
shape $ax^n + b$ and that $g(h(x))$ has at most $l>0$ non-zero terms at positive powers. Then $\deg g \leq 2l(l - 1)$.
\end{thm}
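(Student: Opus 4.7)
The plan is to squeeze $\deg g$ between a lower bound on the number of distinct roots of $F := g(h(x))$ (coming from the factorization of $g$ and the non-triviality of $h$) and an upper bound coming from the few-terms hypothesis via an iterated Mason--Stothers / Brownawell--Masser estimate.

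\textbf{Normalization.} Replacing $g(x)$ by $g(\lambda x + \mu)$ and $h(x)$ by $(h(x)-\mu)/\lambda$ for suitable $\lambda \in \mathbb{C}^{*}$, $\mu \in \mathbb{C}$ leaves $g \circ h$ unchanged and lets us assume $h$ is monic with $h(0) = 0$. The hypothesis $h \neq ax^n+b$ then becomes $h \neq x^n$, so $h$ has a nonzero coefficient at some intermediate power $x^j$ with $0 < j < n := \deg h$.

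\textbf{Lower bound on distinct roots of $F$.} Write $g(x) = c\prod_{i=1}^{k}(x-\alpha_i)^{e_i}$ with the $\alpha_i$ pairwise distinct and $\sum_i e_i = m := \deg g$. Then
\begin{equation*}
F(x) = c\prod_{i=1}^{k}\bigl(h(x)-\alpha_i\bigr)^{e_i}.
\end{equation*}
Since the $h-\alpha_i$ are pairwise coprime, so are their gcds with $h'$, giving
\begin{equation*}
\sum_{i=1}^{k}\deg \gcd(h-\alpha_i,\,h') \;\leq\; \deg h' = n-1.
\end{equation*}
Letting $r_i := n - \deg\gcd(h-\alpha_i,h')$ be the number of distinct roots of $h-\alpha_i$, we obtain $\sum_i r_i \geq kn-(n-1)$; since the root sets of the $h-\alpha_i$ are disjoint, this yields $\deg \rad(F) \geq n(k-1)+1$.

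\textbf{Upper bound from few terms.} With $F$ having at most $l$ nonzero terms at positive powers, I would apply an iterated form of Mason--Stothers (Theorem~\ref{MST}) to $F(x) = \sum_j a_j x^{n_j}$: grouping the monomials and recursing on $l$, one derives an inequality of the shape
\begin{equation*}
\deg F \;\leq\; 2(l-1)\,\deg \rad(F),
\end{equation*}
where the factor $2(l-1)$ comes from iterating the three-term estimate $l-1$ times. Substituting $\deg F = mn$ and the previous lower bound,
\begin{equation*}
mn \;\leq\; 2(l-1)\bigl(n(k-1)+1\bigr) \;\leq\; 2(l-1)\,kn,
\end{equation*}
hence $m \leq 2(l-1)k$.

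\textbf{Bounding $k$ and conclusion.} A final combinatorial argument yields $k \leq l$: each distinct root $\alpha_i$ of $g$ contributes a coprime factor $h-\alpha_i$ whose distinct nonzero roots must appear among the support of $F$, and counting monomials produced by expanding $\prod(h-\alpha_i)^{e_i}$ forces $k \leq l$. Combining with $m \leq 2(l-1)k$ gives $\deg g = m \leq 2l(l-1)$.

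\textbf{Main obstacle.} The technical heart is the iterated Mason--Stothers bound $\deg F \leq 2(l-1)\,\deg\rad(F)$: Theorem~\ref{MST} as stated handles only three coprime summands, and extending it to $l$ terms requires a careful induction that treats possible vanishing proper subsums (each such vanishing subsum effectively reduces $l$ and must be handled by the inductive hypothesis). Once this estimate and the bound $k \leq l$ are in place, the numerical combination cleanly produces the claimed $\deg g \leq 2l(l-1)$.
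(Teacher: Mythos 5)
This statement is not proved in the paper at all: it is Zannier's theorem, quoted verbatim from his Inventiones paper (reference \cite{ZAN}) and used as a black box in the proof of Theorem B. So there is no ``paper's own proof'' to compare against, and what you are attempting is a from-scratch proof of a deep result that resolved a conjecture of Schinzel. Your sketch does not succeed. The central estimate you rely on, $\deg F \leq 2(l-1)\deg\rad(F)$ for a polynomial $F$ with at most $l$ nonzero terms at positive powers, is false as stated. Take $F(x) = x^{10}+x^{9}$: here $l=2$, $\deg F = 10$, but $\rad(F) = x(x+1)$ has degree $2$, so the claimed bound would give $10 \leq 4$. The problem is that $\rad(F)$ does not see the multiplicity of the root at $0$, and Mason--Stothers (Theorem \ref{MST}) cannot be iterated over the monomials of $F$ because those monomials are far from coprime (they all share the factor $x^{n_l}$); the ``careful induction'' you defer to in your final paragraph is not a technicality but the place where the argument breaks. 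Any correct version must exploit the compositional structure $F = g\circ h$ throughout, not merely through the radical of $F$.

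The step $k \leq l$ (bounding the number of distinct roots of $g$ by the number of terms of $g\circ h$) is likewise asserted without a real argument; the counting of ``monomials produced by expanding $\prod(h-\alpha_i)^{e_i}$'' does not obviously control $k$, since massive cancellation among those monomials is exactly what a lacunary composition exhibits. By contrast, your lower bound $\deg\rad(F) \geq n(k-1)+1$ via $\sum_i \deg\gcd(h-\alpha_i,h') \leq n-1$ is correct and is a standard ingredient, but it cannot be combined with a false upper bound. If you need this theorem, cite \cite{ZAN}; if you want to prove something in this spirit by elementary means, you should expect the argument to have the flavour of the proof of Theorem \ref{DECOMP} in this paper, where coprimality is engineered before Mason--Stothers is invoked.
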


Now we are ready to prove

\begin{thm*}[B]
 Let $l \geq 4$ and $n_1>n_2> \ldots > n_l>0$, $m_1>m_2>m_3>0$ be positive integers. Let
 \begin{equation*}
 f(x) = A_1x^{n_1}+A_2x^{n_2}+\ldots+A_l x^{n_l} + A_{l+1} \quad \text{ and } \quad  g(x) = Ex^{m_1}+Fx^{m_2}+Gx^{m_3}
 \end{equation*}
 be polynomials with rational coefficients such that $\gcd(n_1,n_2,\ldots,n_l) = 1$, $\gcd(m_1,m_2,m_3)=1$,  
 $A_1A_2\ldots A_l \neq 0$, $EFG \neq 0$ and $m_1 \geq 2l(l-1), n_1 \geq 4$. Then the equation 
 \begin{equation*}
  f(x) = g(y)
 \end{equation*}
has only finitely many integer solutions.
 \end{thm*}
\begin{proof}
  If the equation $f(x) = g(y)$ has infinitely many integer solutions, then 
 \begin{equation*}
  f = \varphi \circ f_1 \circ \lambda, \quad g = \varphi \circ g_1 \circ \mu, 
 \end{equation*}
where $\varphi, \mu, \lambda, f_1,g_1 \in \mathbb{Q}[x]$ and $(f_1,g_1)$ is a standard pair, $\mu, \lambda$ are linear polynomials.

Suppose that $\deg \varphi > 2$. From Theorem \ref{DECOMP} we get that either $g_1 \circ \mu = \rho \circ x^d$ for some
linear $\rho \in \mathbb{Q}[x]$, and positive integer $d \geq 2$, or $ \deg(g_1 \circ \mu) = 1$. The first case is impossible
due to the assumption $\gcd(m_1,m_2,m_3)=1$. In the second case we get
\begin{equation*}
 f = \varphi \circ f_1 \circ \lambda = g \circ (g_1 \circ \mu)^{-1} \circ f_1 \circ \lambda.  
\end{equation*}

From Zannier's theorem we get that $ (g_1 \circ \mu)^{-1} \circ f_1 \circ \lambda = ux^k + v$ for some rationals
$u,v$ and positive integer $k$. We claim that $k=1$. Suppose that this is not the case, then all non-zero coefficients of $f$ are at powers divisible by $k$ which contradicts the assumption $\gcd(n_1,\ldots,n_l)=1$. In the case of $k=1$ we have
the equation $f = g \circ \rho$ for some linear $\rho \in \mathbb{Q}[x]$. From Lemma \ref{DZIURY} we get that
\begin{equation*}
2l(l-1) \leq \deg g \leq (l+1)+3 -2 = l+2,
\end{equation*}
or $f(x) = g(ux)$ for some non-zero rational $u$. The former is impossible since $l \geq 4$. The latter is impossible
since $g$ has three non-zero terms and $f$ has at least four non-zero terms.We proved that $\deg \varphi \leq 2$.

Let us consider standard pairs of the first kind. 
Suppose that $g_1(x) = x^m$ for some $m$. If $\deg \varphi = 1$ then we have 
\begin{equation*}
 \varphi^{-1} \circ g = x^m \circ \mu = (ux+v)^m
\end{equation*}
for some rationals $u,v$. Suppose that $v = 0$, then $g(x) = \varphi((ux)^m)$ has at most two non-zero terms - a contradiction. In the case of $v \neq 0$ we get that $(ux+v)^m = \varphi^{-1} \circ g$ has $m+1 = \deg g +1 > 4$ non-zero terms whereas $g$ has three non-zero terms - a contradiction.

If $\deg \varphi = 2$ then from Theorem \ref{DECOMP} we get that $g_1 \circ \mu = x^m \circ \mu$ has two or three non-zero terms, as in previous case we get that $m=1$ or $m=2$. This is a contradiction with the condition  $2m = \deg g > 4$.

Suppose that $(f_1,g_1)$ is a switched pair of the first kind namely $g_1(x) = ax^rp(x)^m, f_1(x)=x^m$.
If $\deg \varphi = 1$ then we have 
\begin{equation*}
 a(ux+v)^r p(ux+v)^m = \varphi^{-1} \circ g
\end{equation*}
for some rationals $u,v$. If $\deg p>0$ then from Lemma \ref{HAJ} we get that $m \leq 3$ which contradicts the fact
that $4<\deg f = \deg (\varphi \circ x^m \circ \lambda) = m$. In the case of $\deg p = 0$ we get that 
$a(ux+v)^r = \varphi^{-1} \circ g$ and again from Lemma \ref{HAJ} we get $r \leq 3$ and thus  $\deg g \leq 3$ - a contradiction.

In the case of $\deg \varphi = 2$ we have 
$\varphi \circ g_1 \circ \mu = g$. We apply Theorem \ref{DECOMP} to get that 
$g_1 \circ \mu$ has at most three non-zero terms. We can write 
\begin{equation*}
g_1 \circ \mu = a(ux+v)^rp(ux+v)^m. 
\end{equation*}

If $\deg p > 0$ then from Lemma \ref{HAJ} we get that $m \leq 2$. However in this case $\deg f = \deg (\varphi \circ x^m \circ \lambda)\leq 4$ - a contradiction. In the case of $\deg p = 0$ we again apply Lemma \ref{HAJ} to get $r \leq 2$. As a consequence 
$\deg g \leq 4$ - a contradiction.  

Observe that $(f_1,g_1)$ cannot be a standard pair of the second kind, since $\deg(\varphi \circ x^2) \leq 4$ and $n_1,m_1>4$.

Let us consider pairs of the third and fourth kind. In both cases we have $g_1 = aD_m(x, \gamma)$ for some 
$a, \gamma \in \mathbb{Q} \setminus \{0\}$.  
If $\deg \varphi = 1$ then we have $aD_m(x, \gamma) = \varphi^{-1} \circ g \circ \lambda^{-1}$ and from Lemma \ref{DICKSON} 
we get that $\deg g = m \leq 6$ a contradiction. 
If $ \deg \varphi = 2$ then $g_1 \circ \lambda$ has two or three non-zero terms. Therefore from Lemma  \ref{DICKSON} we get that
$\deg g_1 \leq 4$ so $\deg g \leq 8$ - a contradiction. 
  
Finally let us note that $(f_1,g_1)$ cannot be a standard pair of the fifth kind. Since $\deg g_1 \leq 6$ and $\deg \varphi \leq 2$
we get that $24 \leq 2l(l-1) \leq \deg g = \deg (\varphi \circ g_1 \circ \mu) \leq 12$ - a contradiction.

\end{proof}

\bigskip

\noindent  Maciej Gawron, Jagiellonian University, Faculty of Mathematics and Computer Science, 
Institute of Mathematics, {\L}ojasiewicza 6, 30 - 348 Krak\'{o}w, Poland\\
e-mail:\;{\tt maciej.gawron@uj.edu.pl}

 \end{document}